\documentclass[12pt]{iopart}
\usepackage{amsfonts}
\usepackage{amssymb,amsthm}
\usepackage{cite}
\newtheorem{theorem}{Theorem}[section]       %\label{thm:}
 %\label{ass:}
\newtheorem{lemma}[theorem]{Lemma}           %\label{lem:}
   %\label{cor:}
 %\label{def:}
         %\label{rem:}
\newtheorem{proposition}[theorem]{Proposition} %label{prop:}
 %\label{conj:}

%\\numberwithin{equation}{section}

\newcommand{\curl}{\mbox{curl}}

\begin{document}
\title{An integrating factor matrix method to find first integrals}

\author{K V I Saputra$^1$, G R W Quispel$^2$, L van Veen$^3$}

\address{$^1$Faculty of Science and Mathematics, University Pelita Harapan,
Jl. M.H. Thamrin Boulevard, Tangerang Banten, 15811 Indonesia}
\address{$^2$Department of Mathematics and Statistical Science,
La Trobe University, Bundoora 3086, Australia}
\address{$^3$Faculty of Science, University of Ontario Institute of Technology,
2000 Simcoe St. N., Oshawa, ON, Canada L1H 7K4}
\ead{kie.saputra@staff.uph.edu}
\begin{abstract}
In this paper we developed an integrating factor matrix method to derive conditions for the existence of first integrals. 
We use this novel method to obtain first integrals, along with the conditions for their existence, for two and three dimensional Lotka-Volterra systems with constant terms. The results are compared to previous results obtained by other methods.
\end{abstract}
\pacs{02.30.Hq, 02.30.Ik, 87.10.Ed}
%\submitto{\JPA}
%\maketitle

\section{Introduction}
Consider a general dynamical system in $n$ dimensions as follows
\begin{equation}\label{gen}
    \dot{x} = f(x,\mu), \quad x \in \mathbb{R}^n \quad\mbox{with}\quad n >1,
\end{equation}
where $\mu \in \mathbb{R}^p$ is a vector of parameters. Our main objective is to find conditions on the parameters such that 
the system above possesses a first integral. In this paper, we concentrate on finding integrals of Lotka-Volterra systems with constant terms. 

The Lotka-Volterra system (LV) has been the subject of intensive studies during the past century. The interaction 
of two species in an ecosystem \cite{hofbauer}, a metamorphosis of turbulence in plasma physics \cite{ball}, 
hydrodynamic equations \cite{busse}, autocatalytic chemical reactions \cite{hering_roger} 
and many more, are of Lotka-Volterra type. 
Nevertheless, the dynamics of such systems is far from being understood. 
Finding first integrals of LV systems, or any dynamical system, gives global information about the long-term behaviour of such systems. 

In two-dimensional systems, the existence of a first integral implies that the system is completely integrable because the phase portraits are completely characterized. For three-dimensional systems, the existence of a first integral means that there cannot be chaotic motion as the solutions will live inside the level sets of such an integral function. Here we include constant terms to generalize LV systems. The 
constant term can be considered as a constant rate harvesting. Dynamics and bifurcation analysis of such a system have been studied in \cite{saputra}.

Many different methods have been developed to study the existence of first integrals of LV systems. 
Perhaps one of the earliest attempts to study the existence of first integrals was published by Cair{\'o} et al.~\cite{cairo89}, who studied the integrability of $n$-dimensional Lotka-Volterra equations using the Carleman embedding method. They sought an invariant that may 
be time-dependent. Cair{\'o} and Llibre~\cite{cairo00} used a polynomial inverse integrating factor to find a condition for the existence of the first integral. The Darboux method that uses the relationship between algebraic curves and integrability of differential equations has been introduced by Cair{\'o} and Llibre~\cite{cairo97} to study two-dimensional LV systems. Cair{\'o} et al. \cite{cairo99} used the same method to search 
for a first integral of two-dimensional quadratic systems. 

The Darboux method has also been used to derive an integral for three-dimensional LV systems~\cite{cairo_nonlinearmathphy00} and for the so-called ABC systems, which correspond to particular cases of three-dimensional LV. The ABC systems were among the first three-dimensional models that were investigated. Grammaticos et al.~\cite{grammaticos} derived first integrals using the Frobenius Integrabilty Theorem method (first introduced by Strelcyn and Wojciechowski~\cite{strelcyn88}). Ollagnier~\cite{ollagnier} has found polynomial first integrals of the ABC system. 
Gao and Liu~\cite{gao98} presented a method that basically relies on changing variables to transform three-dimensional LV systems to two-dimensional ones. The existence of first integrals follows from integrating the two-dimensional systems. Gao \cite{gao99} used a direct integration method to find first integrals of three-dimensional Lotka-Volterra systems. A new algorithm presented by Gonzalez-Gascon and Peralta Salas \cite{Gonzalez-Gascon00} also used three-dimensional LV systems as a test case.

The approach closest to the work presented here uses the idea of associating a Hamiltonian to a first integral of a vector field. It was introduced by Nutku \cite{nutku90}. A generalization of this idea to two-dimensional vector fields having a first integral was provided by Cair{\'o} and Feix~\cite{cairo92ontheHam}.
They showed that, through time rescaling, the first integral can be considered as a Hamiltonian. Subsequently, Cairo et al.~\cite{cairo93} and Hua et al.~\cite{hua93} used an Ansatz for their Hamiltonian functions. They assumed that a first integral (or an invariant) $H$ is a product of functions, $H = P(x,y)(Q(x,y))^{\mu}(R(x,y))^{\nu}$, where $P,Q,R$ are first degree polynomials, and derived conditions for two-dimensional quadratic systems to have a first integral.

Another Hamiltonian method that has been used works as follows. A general system (\ref{gen}) is said to have a Hamiltonian structure if and only if it can be written as $\dot{x}=f(x)=S(x)\nabla H(x)$, where $S$ is a skew-symmetric matrix and $H$ is a smooth function. The matrix function $S$ must satisfy the Jacobi identity~\cite{gao00}. Plank~\cite{plank} has used this property to find a Hamiltonian function for two-dimensional LV systems, while Gao~\cite{gao00}, using the same property, has derived conditions for three-dimensional systems not to be chaotic. 

In this paper, we will not impose the Jacobi identity on the matrix $S$ since we only want $H$ to be a first integral. Thus, it is sufficient to ensure that $f(x)$ can be written as $S(x)\nabla H(x)$. Consider the following proposition.
\begin{proposition}[McLachlan et al. 1999~\cite{mclachlan99}]
Let $f \in C^r(\mathbb{R}^n,\mathbb{R}^n)$, $r \geq 1$, $n>1$, be a vector field and $H \in C(\mathbb{R}^n,\mathbb{R})$ be a first integral of the vector field $f$ (i.e. $f \cdot \nabla H=0$) for all $x$. Then there is a skew-symmetric matrix function $S(x)$ on the domain $\{x:\nabla H \neq0\}$ such that $f=S\nabla H$.
\end{proposition}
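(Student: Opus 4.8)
The plan is to exhibit $S(x)$ by an explicit, pointwise algebraic formula rather than by solving any differential equation. Fix a point $x$ in the domain $D=\{x:\nabla H(x)\neq 0\}$ and abbreviate $g=\nabla H(x)$ and $v=f(x)$, regarded as column vectors in $\mathbb{R}^n$. The hypothesis $f\cdot\nabla H=0$ says precisely that $v$ is Euclidean-orthogonal to $g$, so the task reduces to the following linear-algebra problem: given two orthogonal vectors $v\perp g$ with $g\neq 0$, produce a skew-symmetric $n\times n$ matrix $S$ with $Sg=v$. The natural candidate, assembled from the outer products of the only two vectors available, is
\[
S(x)=\frac{1}{\|\nabla H(x)\|^{2}}\Bigl(f(x)\,\nabla H(x)^{T}-\nabla H(x)\,f(x)^{T}\Bigr),
\]
where $\|\cdot\|$ denotes the Euclidean norm; this is well defined on $D$ because the denominator is strictly positive there.

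Next I would check the two defining properties by direct computation. Transposition interchanges the two outer-product terms and flips the overall sign, so $S(x)^{T}=-S(x)$, i.e.\ $S$ is skew-symmetric. For the equation $f=S\nabla H$, one computes
\[
S(x)\,\nabla H(x)=\frac{1}{\|\nabla H\|^{2}}\Bigl(f\,(\nabla H^{T}\nabla H)-\nabla H\,(f^{T}\nabla H)\Bigr)=f-\frac{f\cdot\nabla H}{\|\nabla H\|^{2}}\,\nabla H=f,
\]
the last equality being exactly the point where the first-integral hypothesis $f\cdot\nabla H=0$ enters. This establishes the proposition.

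Finally I would add a short remark on regularity and on the restriction to $D$. The formula for $S$ is a rational combination of the components of $f$ and of $\nabla H$ whose denominator $\|\nabla H\|^{2}$ never vanishes on $D$, so $S$ is at least continuous on $D$, and inherits $C^{r-1}$ smoothness when $H\in C^{r}$ and $f\in C^{r}$. The restriction to $D$ is genuine: the construction divides by $\|\nabla H\|^{2}$, and indeed at a point where $\nabla H=0$ the equation $S\nabla H=f$ forces $f=0$, so no such $S$ can exist there unless $f$ vanishes too. I do not expect a real obstacle — the content of the statement is the recognition that orthogonality of $f$ to $\nabla H$ is \emph{exactly} the algebraic condition making the rank-two skew form above reproduce $f$; the only care needed is the bookkeeping about the domain and about how much differentiability of $H$ is implicitly assumed for $\nabla H$ (hence $D$) to be meaningful.
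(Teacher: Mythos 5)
Your construction $S=\bigl(f\,\nabla H^{T}-\nabla H\,f^{T}\bigr)/\|\nabla H\|^{2}$ is correct, and it is exactly the argument behind the cited result of McLachlan et al., which the paper quotes without reproving; the skew-symmetry and the identity $S\nabla H=f$ follow just as you compute, with orthogonality $f\cdot\nabla H=0$ entering at precisely the step you indicate. Your closing remarks on the domain $\{x:\nabla H\neq 0\}$ and on the implicit differentiability of $H$ are accurate and do not affect the validity of the proof.
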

As a consequence, there is also a skew symmetric matrix function $T(x)$ on the domain $\{x:f \neq0\}$ such that $\nabla H=T f$. We are going to use this idea to find first integrals and the associated constraints on the parameters for two- and three-dimensional LV systems with constant terms. We call the matrix $T$ an integrating factor matrix if and only if 
\begin{equation}
\curl(\,Tf\,)=0.
\end{equation}
Making an Ansatz concerning the integrating factor matrix $T$, 
we obtain both an integral and the conditions on the parameters for its existence. The meaning of curl depends on the dimension of the system. 
For two dimensional systems, $\curl(Tf)$ is the scalar 
\[
    \frac{\partial (Tf)_1}{\partial x_2} - \frac{\partial (Tf)_2}{\partial x_1} = 0,
\]
where $(TF)_i$ is the $i-th$ component of the vector $Tf$. In three-dimensional systems, $\curl(T f)=\nabla \times Tf$ as usual.

In Secs. \ref{2d} and \ref{3d} we will discuss the results of applying the integrating factor matrix approach to two- and three-dimensional
Lotka-Volterra systems with constant terms, respectively. 
In either case , we will impose some conditions on the integrating factor matrix. We will assume, that its entries
can be written as a product of functions of a single variable. In the three-dimensional case, we use two different
forms of the matrix. These assumptions are restrictive, and we do not claim to present the most general class
of first integrals which can be found with the proposed method. However, the most general form of the integrating
factor matrix will lead to conditions in the form of coupled, nonlinear partial differential equations. The analysis
of these equations is work in progress. We emphasize, that with the simplifying conditions used in the current paper,
we reproduce many known first integrals and uncover several new ones both in two and three dimensions
Each section is concluded with a comparison to earlier results from the literature.

%--------------------------------------------------------------------------------------------------------------------------------
%Section two -- two dimensional Lotka-Volterra with constant terms
%--------------------------------------------------------------------------------------------------------------------------------
\section{Two-dimensional LV systems with constant terms \label{2d}}
In this section, we consider integrals of the two-dimensional Lotka-Volterra system with constant terms:
\begin{eqnarray}\label{eq:lv_ef}
    &\dot{x_1} = f_1(x_1,x_2) = x_1(b_1 + a_{11}x_1 + a_{12}x_2) + e_1, \nonumber\\
    &\dot{x_2} = f_2(x_1,x_2) = x_2(b_2 + a_{21}x_1 + a_{22}x_2) + e_2,
\end{eqnarray}
where $b_i$, $a_{ij}$ ($i,j=1,2$) are arbitrary parameters and $e_1,e_2$ are the constant terms. 
We choose an integrating factor matrix as follows,
\begin{equation}\label{eq:intmatrix}
    T(x_1,x_2) = \left( \begin{array}{cc}
    0 & -R \\
      R & 0 \\
    \end{array} \right), 
\end{equation}
where the function $R=R(x_1,x_2)$ is to be determined later on. The matrix $T(x_1,x_2)$ is an integrating factor if and only if the curl of $T \emph{f}\,$ is zero, where $\emph{f}=(f_1,f_2)$. As is mentioned before, in the two dimensional case, this condition is equivalent to the following, 
\begin{equation}\label{eq:intfac}
\frac{\partial(Rf_1)}{\partial x_1}+\frac{\partial(Rf_2)}{\partial x_2}=0.
\end{equation}
The associated first integral $H$ is given by
\begin{equation}\label{eq:integral}
H(x_1,x_2)= \int R(x_1,x_2) f_1(x_1,x_2) \, d x_2 + h(x_1),
\end{equation} 
where $h(x_1)$ is found by imposing $\partial H/\partial x_1 = -Rf_2$.

Let us assume that $R$ is separable, i.e. $R(x_1,x_2)=A(x_1)B(x_2)$. We have the following lemma.
\begin{lemma}
	Let $f_1$ and $f_2$ be the functions given in the vector field (\ref{eq:lv_ef}), then the condition
	(\ref{eq:intfac}) determines the forms of $A(x_1)$ and $B(x_2)$ to be,
\begin{equation}\label{lemma1}
	A(x_1)	= \exp(\frac{\alpha x_1}{a_{12}}) {x_1}^{\beta/a_{12}}, \quad
	B(x_2)	= \exp(\frac{-\alpha x_2}{a_{21}}) {x_2}^{\gamma/a_{21}},
\end{equation}
where $\alpha$, $\beta$ and $\gamma$ are constants to be determined later on.
\end{lemma}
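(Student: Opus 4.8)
The plan is to substitute the separable Ansatz $R = A(x_1)B(x_2)$ into the integrating-factor condition (\ref{eq:intfac}) and exploit the special structure of the nonlinearity in (\ref{eq:lv_ef}). Setting $P(x_1) := A'(x_1)/A(x_1)$ and $Q(x_2) := B'(x_2)/B(x_2)$, the product rule turns (\ref{eq:intfac}), after dividing through by $R = AB$, into the identity
\[
P(x_1)\,f_1 + Q(x_2)\,f_2 + \frac{\partial f_1}{\partial x_1} + \frac{\partial f_2}{\partial x_2} = 0 .
\]
I would then write $f_1 = a_{12}x_1 x_2 + \phi_1(x_1)$ and $f_2 = a_{21}x_1 x_2 + \phi_2(x_2)$, where $\phi_1$ and $\phi_2$ are polynomials in a single variable; note also that $\partial f_1/\partial x_1 + \partial f_2/\partial x_2$ is affine in $(x_1,x_2)$.

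The crucial step is to apply the mixed second derivative $\partial^2/\partial x_1\partial x_2$ to the identity above. Every term that is a function of $x_1$ alone or of $x_2$ alone is killed, and so is the affine divergence term, leaving only
\[
a_{12}\,\frac{d}{dx_1}\bigl(x_1 P(x_1)\bigr) + a_{21}\,\frac{d}{dx_2}\bigl(x_2 Q(x_2)\bigr) = 0 .
\]
Since the first summand depends only on $x_1$ and the second only on $x_2$, each must be a constant; I call them $\alpha$ and $-\alpha$. Integrating once (using $a_{12}\neq 0$, $a_{21}\neq 0$) gives $x_1 P(x_1) = (\alpha x_1 + \beta)/a_{12}$ and $x_2 Q(x_2) = (-\alpha x_2 + \gamma)/a_{21}$ for integration constants $\beta,\gamma$, and a second integration, using $A = \exp\!\int P\,dx_1$ and $B = \exp\!\int Q\,dx_2$, yields, up to an irrelevant overall multiplicative constant, exactly the forms (\ref{lemma1}).

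The computation is essentially routine, so I do not expect a genuine obstacle; the points needing a word of care are the standing nondegeneracy assumptions $a_{12}\neq 0$, $a_{21}\neq 0$ (the degenerate cases fall outside the scope of this lemma) and the observation that the argument uses only the part of the identity that survives the mixed derivative. Substituting the resulting $P$ and $Q$ back into the displayed identity leaves a polynomial identity in $(x_1,x_2)$ whose coefficients, once equated to zero, determine $\alpha$, $\beta$, $\gamma$ in terms of the $b_i$, $a_{ij}$ and $e_i$; since the lemma asserts only the \emph{form} of $A$ and $B$, those relations are postponed and are not needed here. The only real bookkeeping risk is carrying the signs and the factors $a_{12}$, $a_{21}$ through the two integrations correctly so that the final expressions match (\ref{lemma1}).
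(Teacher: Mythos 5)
Your proposal is correct and follows essentially the same route as the paper: substitute the separable Ansatz into (\ref{eq:intfac}), divide by $R$, apply $\partial^2/\partial x_1\partial x_2$ to kill all single-variable and affine terms, separate into constants $\pm\alpha$, and integrate twice (with $a_{12},a_{21}\neq 0$) to recover the forms (\ref{lemma1}). Your version just spells out explicitly the separation and integration steps that the paper's proof leaves implicit.
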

\begin{proof}
We substitute $R$ into 
(\ref{eq:intfac}) to obtain
\begin{equation}\label{eq:genA3}
	\frac{1}{A} \frac{d A}{d x_1}a_{12}x_1x_2 + \frac{1}{B} \frac{d B}{d x_2} a_{21}x_1x_2 + F(x_1) + G(x_2) = 0,
\end{equation}
where 
\begin{eqnarray}\label{eq:FG}
	F(x_1) &= &\frac{A_1}{A}(b_1x_1+a_{11}{x_1}^2+e_1)+b_1+2a_{11}x_1+a_{21}x_1, \nonumber\\
	G(x_2) &= &\frac{B_2}{B}(b_2x_2+a_{22}{x_2}^2+e_2)+b_2+2a_{22}x_2+a_{12}x_2. 
\end{eqnarray}
Applying $\partial^2/(\partial x_1 \partial x_2)$ to both sides of 
equation (\ref{eq:genA3}), we obtain a separable differential equation in terms of 
$A(x_1)$ and $B(x_2)$, which can be solved explicitly. 
\end{proof}
We then substitute $A(x_1)$ and $B(x_2)$ to the equation (\ref{eq:genA3}) to obtain
\begin{equation}
	\gamma x_1 + F(x_1) + \beta x_2 + G(x_2) = 0,
\end{equation}
or, 
\begin{equation}
	\gamma x_1 + F(x_1) = \zeta, \quad \beta x_2 + G(x_2) = -\zeta,
\end{equation}
where $\zeta$ is a constant. Finally, we obtain the following set of equations:
\begin{equation}
\hspace{-2cm}(\frac{\alpha e_1}{a_{12}} + \frac{\beta b_1}{a_{12}} + b_1) + (\gamma +\frac{\alpha b_1}{a_{12}} +\frac{\beta a_{11}}{a_{12}} + 2a_{11}+a_{21})x_1 
+  \frac{\alpha a_{11}}{a_{12}}{x_1}^2 +   \frac{\beta e_1}{a_{12}x_1}  = \zeta, 
\end{equation}
and respectively, 
\begin{equation}
\hspace{-2cm}(- \frac{\alpha e_2}{a_{21}} + \frac{\gamma b_2}{a_{21}} + b_2) + (\beta  - \frac{\alpha b_2}{a_{21}} + \frac{\gamma a_{22}}{a_{21}} +2a_{22}+a_{12})x_2 
- \frac{\alpha a_{22}}{a_{21}}{x_2}^2 + \frac{\gamma e_2}{a_{21}x_2}  = -\zeta.
\end{equation}
The expression above is satisfied if and only if the parameters satisfy the following conditions:
\begin{eqnarray}			
	&0 = \frac{\alpha a_{11}}{a_{12}} = \frac{\alpha a_{22}}{a_{21}} \label{two}\\	
	&0 = \frac{\beta e_1}{a_{12}} = \frac{\gamma e_2}{a_{21}} \label{three}\\	
	&0 = \gamma +\frac{\alpha b_1}{a_{12}} +\frac{\beta a_{11}}{a_{12}} + 2a_{11}+a_{21} \label{one}\\
	&0 = \beta  - \frac{\alpha b_2}{a_{21}} + \frac{\gamma a_{22}}{a_{21}} +2a_{22}+a_{12} \label{four}\\
	&\frac{\alpha e_1}{a_{12}} + \frac{\beta b_1}{a_{12}} + b_1 = \frac{\alpha e_2}{a_{21}} - \frac{\gamma b_2}{a_{21}} - b_2. \label{seven}
\end{eqnarray}
Let us introduce $l_1:=\beta/a_{12}+1$ and $l_2:=\gamma/a_{21}+1$. When $\alpha=0$, 
the system (\ref{three}-\ref{seven}) is equivalent to the following system of equation:
\begin{eqnarray}
    e_1 l_1 = e_1, \label{cond1} \\ 
    e_2 l_2 = e_2, \label{cond2} \\
    l_1a_{11}+l_2a_{21}=-a_{11}, \label{cond3}\\
    l_1a_{12}+l_2a_{22}=-a_{22}, \label{cond4}\\
    b_1l_1 + b_2l_2=0,\label{cond5}
\end{eqnarray}
which is more comfortable to work on and closely related to Plank \cite{plank}. 
As a consequence, in the case of $\alpha=0$, our problem can be divided 
into three different subcases, depending on the value of the constant terms. In the following we give the corresponding conditions along with the resulting integrals for the cases  $e_1,e_2\neq 0$, $e_1\neq 0,e_2=0$ and $e_1=e_2=0$ separately. The case $e_1=0, e_2 \neq 0$ follows by symmetry considerations. We note that the case corresponding to the original Lotka-Volterra system where $e_1=e_2=0$, has been discussed by various people. 
However we have included a discussion of this case including some previously unknown special solutions. The case 
where $\alpha \neq 0$ is discussed in the subsection (2.4). 

To start with, let us consider (\ref{cond1}-\ref{cond5}) as an overdetermined linear system:
\begin{equation}\label{lin_sys}
   A l = r,
\end{equation}
where $l=(l_1,l_2)$ and the matrix $A$ and the vector $r$ are to be determined later. The system has a solution only if the vector $\emph{r}$ is orthogonal to the left null space of the matrix $A$. 

\subsection{The case $e_1,e_2\neq0$}\label{case3}
We have the case where both the constant terms $e_1$ and $e_2$ are non-zero. Then by (\ref{cond1}) and (\ref{cond2}), this implies that $l_1=l_2=1$.
Moreover, we can simplify the other conditions to:
\begin{equation}\label{cond:notzero_ef}
    b_1+b_2=0, \quad 2a_{11}+a_{21}=0, \quad \textrm{and} \quad  a_{12}+2a_{22}=0.
\end{equation}
If the Lotka-Volterra system (\ref{eq:lv_ef}) with non-zero constant terms $e$ and $f$ satisfies conditions (\ref{cond:notzero_ef}), then it has a first integral that is given by:
\begin{equation}\label{int_ef}
    H = b_1x_1x_2 + a_{11}x_1^{\phantom{1}2}x_2 - a_{22}x_1x_2^{\phantom{1}2} + e_1x_2 - e_2x_1. 
\end{equation}

\subsection{The case $e_1\neq0$, $e_2=0$}\label{case2}
One of the constant terms, $e_1$ is not zero. This means that the free parameter $l_1$ must be 1 by (\ref{cond1}). We now have a linear system like (\ref{lin_sys}) with a 3 by 1 matrix $A$ and a vector $\emph{r}$ in $\mathbb{R}^3$ with only one unknown $l_2$. Without loss of generality, we assume that the matrix A is of rank 1, when A has rank zero we have a trivial integral $H=x_2$ since $\dot{x}_2=0$. Using the fact that this system must be solvable, we can again find the conditions for the existence of the first integral. As we assume that $a_{21}\neq 0$, the solvability conditions are given by:
\begin{equation}\label{cond:notzero_e}
    \frac{2a_{11}a_{22}}{a_{21}}-a_{22}-a_{12}=0, \quad \textrm{and} \quad
    2b_2a_{11}-b_1a_{21}=0,
\end{equation}
and we have $l_2=-2a_{11}/a_{21}$.
%\begin{comment}If we have $a_{21}=0$ and $a_{22}\neq 0$ then $l_2=-(a_{22}+a_{12})/a_{22}$ under the conditions
%\begin{equation}\label{cond:notzero_e2}
%    \frac{b_2}{a_{22}}(a_{22}+a_{12})-b_1=0, \quad \textrm{and} \quad a_{11}=0.
%\end{equation}
%Finally, when $a_{21}=a_{22}=0$ but $b_2 \neq 0$ the solvability conditions are given by
%\begin{equation}\label{cond:notzero_e3}
%    a_{22}+a_{12}=0, \quad \textrm{and} \quad a_{11}=0
%\end{equation}
%and $l_2=-b_1/b_2$.
%\end{comment}
If $l_2$ is not zero then the first integral is given by:
\begin{equation}\label{int_e1}
    H = x_2^{\phantom{1}l_2}(-b_2x_1 - \frac{a_{21}}{2}x_1^{\phantom{1}2} 
    - a_{22}x_1x_2 + \frac{e_1}{l_2}).
\end{equation}
However, in the case where the exponent $l_2$ is zero, the integral is given by:
\begin{equation}\label{int_e2}
    H = -b_2x_1 - \frac{a_{21}}{2}x_1^{\phantom{1}2} - a_{22}x_1x_2 + e_1\ln|x_2|. 
\end{equation}
Note, that expression (\ref{int_e1}) is also obtained as a limit of result (\ref{int_ef}) but the
conditions stated in the case $e_1\neq 0$ and $e_2=0$ are more general than those obtained
in the limit.

\subsection{The case $e_1=0$ and $e_2=0$}\label{case1}
Finally, if $e_1=e_2=0$ equations (\ref{cond1}) and (\ref{cond2}) are trivial and we have a linear system of the form (\ref{lin_sys}) with a 3 by 2 matrix $A$ and a vector $\emph{e}$ in $\mathbb{R}^3$ from (\ref{cond3}-\ref{cond5}).

If A is of maximal rank then the solvability condition of the linear system (\ref{lin_sys}) is given by :
\begin{equation}\label{cond:zero_ef}
    b_1a_{22}(a_{21}-a_{11})+b_2a_{11}(a_{12}-a_{22})=0.
\end{equation}
If our system satisfies this condition  then
\begin{equation}
l_1=\frac{a_{22}(a_{21}-a_{11})}{a_{11}a_{22}-a_{12}a_{21}} \qquad \mbox{and}\qquad l_2=\frac{a_{11}(a_{12}-a_{22})}{a_{11}a_{22}-a_{12}a_{21}}.
\end{equation}
When neither $l_1$ nor $l_2$ are zero, the first integral is given by:
\begin{equation}\label{int1}
    H = x_1^{\phantom{1}l_1}x_2^{\phantom{1}l_2}(\frac{b_1}{l_2} + \frac{a_{11}}{l_2}x_1 
    - \frac{a_{22}}{l_1}x_2). 
\end{equation}
However, either $l_1$ or $l_2$ may be zero and if $l_1 =0$, $l_2\neq0$ and $l_2\neq-1$ then we have $b_2=a_{22}=0$, $a_{11} \neq a_{21}$ and an integral that is given by:
\begin{equation}\label{int2}
    H = x_2^{\phantom{1}l_2}(\frac{b_1}{l_2} + \frac{a_{11}}{l_2}x_1 
    + \frac{a_{12}}{(l_2+1)}x_2). 
\end{equation}
But when $l_1 =0$ and $l_2=-1$, this implies $b_2=0$ and $a_{21}=a_{11}$. It follows that the integral is given by:
\begin{equation}\label{int3}
    H = a_{12}\ln |x_2| - a_{22}\ln|x_1| - \frac{b_1}{x_2} - a_{11}\frac{x_1}{x_2}.
\end{equation}
Finally, when both $l_1$ and $l_2$ are zero, which implies $a_{11}=a_{22}=0$, the first integral is given by:
\begin{equation}\label{int4}
    H = b_1\ln |x_2| + a_{12} x_2 - b_2\ln |x_1| - a_{21} x_1.
\end{equation}
We remark that the case when $l_2 =0$, $l_1\neq0$, $l_1\neq-1$ and the case when $l_2 =0$, $l_1=-1$ follow by symmetry considerations. 

Now let us assume that $A$ has rank one. Excluding the trivial case in which one column vector
is zero, we assume that
\begin{equation}
    (a_{11},a_{12},b_{1})^T = \lambda (a_{21}, a_{22},b_{2})^T,
\end{equation}
for some $\lambda \neq 0$. This leads to the following integral:
\begin{equation}
	H=x_1x_2^{-\lambda}, \quad \textrm{where } \lambda=a_{11}/a_{21}.
\end{equation} 

\subsection{The case where $\alpha \neq 0$} 
This implies $a_{11}=a_{22}=0$, due to conditions (\ref{two}). 
Consider conditions (\ref{one}) and (\ref{four}). Eliminating $\alpha$ in both equations gives:
\begin{equation}\label{onefour}
(\beta+a_{12})b_1a_{21}+(\gamma+a_{21})b_2a_{12}=0.
\end{equation}
Consider now condition (\ref{seven}). After some algebraic manipulation we get:
\begin{equation}\label{seven2}
(\beta+a_{12})b_1a_{21}+(\gamma+a_{21})b_2a_{12}=\alpha(e_2a_{12}-e_1a_{21}),
\end{equation}
and due to the fact that $\alpha \neq 0$, we have
\begin{equation}\label{seven3}
e_2a_{12}-e_1a_{21} = 0.
\end{equation}
Finally conditions (\ref{three}) gives the following three sub-cases:
\begin{equation}
\textrm{(a) } \gamma=\beta=0, \quad \textrm{(b) } \gamma \neq 0, \,\, \beta=0, \quad 
\textrm{and (c) } \gamma, \beta \neq 0
\end{equation}
The case where $\beta \neq 0$ and $\gamma=0$ follows by symmetry. 
Note that the conditions in subcases (b) and (c) imply that $e_1=e_2=0$. These subcases are already discussed and the integrals are equivalent to the integral in equation (\ref{int4}). The only case remaining is when $\gamma=\beta=0$ that gives $b_1+b_2=0$, provided $a_{12}a_{21}\neq 0$. We then obtain the following first integral:
\begin{equation*}
H = e^{(a_{21}x_1 - a_{12}x_2)/b_2}  (e_1+a_{12}x_1x_2),
\end{equation*}
or, equivalently,
\begin{equation}
H = a_{21}x_1 - a_{12}x_2 +b_2 \ln (e_1+a_{12}x_1x_2).
\end{equation}

\subsection{Further notes regarding the known integrals of two-dimensional LV systems and quadratic systems}
Many attempts have been made to study the integrability of two-dimensional LV systems and general quadratic systems. Different first integrals were found using different methods, some similar to those found in this discussion.

The first integral (\ref{int_ef}) along with its conditions (\ref{cond:notzero_ef}) was
probably first found by Frommer in 1934 (see Art{\'e}s and Llibre \cite{artes94}). It was also derived by Cair{\'o} {\sl et al.}, who used the Hamiltonian method and by Hua {\sl et al.}, who studied the connection between the existence of a first integral and the Painlev{\'e} property in a general quadratic system. In the latter work, the form of the first integral and the vector field are different, but through some invertible transformations it is not hard to check that the result is actually equivalent.

Many first integrals were known for 
the case $e_1=0$ and $e_2=0$. For instance, (\ref{int1}-\ref{int4}) were derived by Nutku \cite{nutku90}, Plank \cite{plank}
and Cair{\'o} {\sl et al.} \cite{cairo92families,cairo99,cairo00} using various methods. We remark that the first integral (\ref{int4}) that has constraints $a_{11}=a_{22}=0$ was first derived by Volterra himself as a constant of motion (see the book by Hofbauer and Sigmund \cite{hofbauer}). 
To our best knowledge, the other first integrals presented here are new.

%--------------------------------------------------------------------------------------------------------------------------------
%Section three -- three dimensional Lotka-Volterra with constant terms
%--------------------------------------------------------------------------------------------------------------------------------
\section{Three-dimensional LV systems with constant terms \label{3d}}
We consider the following three-dimensional LV systems with constant terms:
\begin{eqnarray}\label{eq:lv_e3}
    &\dot{x_1} = f_1(x_1,x_2,x_3) = x_1(b_1 + a_{11}x_1 + a_{12}x_2 + a_{13}x_3) + e_1, \nonumber\\
    &\dot{x_2} = f_2(x_1,x_2,x_3) = x_2(b_2 + a_{21}x_1 + a_{22}x_2 + a_{23}x_3) + e_2, \nonumber\\
    &\dot{x_3} = f_3(x_1,x_2,x_3) = x_3(b_3 + a_{31}x_1 + a_{32}x_2 + a_{33}x_3) + e_3,
\end{eqnarray}
where $b_i$, $a_{ij}$ ($i,j=1,2,3$) are arbitrary parameters and $e_i$ ($i=1,2,3$) are the constant terms. 
In this section, in order to find integrals of the system above we shall make the following two Ansatzs for the skew-symmetric matrix $T$, 
\begin{equation}\label{T1}
    T_1(x_1,x_2,x_3) = R
    \left( \begin{array}{ccc}
    0 & -\alpha' & -\beta' \\
    \alpha' & 0 & -\gamma' \\
    \beta'  & \gamma' & 0 \\
    \end{array} \right),
\end{equation}
respectively, 
\begin{equation}\label{T2}
    T_2(x_1,x_2,x_3) = R
    \left( \begin{array}{ccc}
    0 & -\alpha x_3 & -\beta x_2 \\
    \alpha x_3 & 0 & -\gamma x_1 \\
    \beta x_2  & \gamma x_1 & 0 \\
    \end{array} \right),
\end{equation}
where $\alpha,\alpha',\beta,\beta',\gamma,\gamma' \in \mathbb{R}$ are arbitrary parameters. 
We also use an ansatz for the function $R$, namely 
$R=R(x_1,x_2,x_3)=x_1^{l_1-1}x_2^{l_2-1}x_3^{l_3-1}$, where the $l_i$ ($i=1,2,3$) are free 
parameters that are to be determined later on. As discussed in the introduction, this Ansatz is restrictive.
However, a more general treatise falls outside the scope of the present paper. 

The matrices $T_i(x_1,x_2,x_3)$ ($i=1,2$) are 
integrating factors if and only if $\curl(T_i \emph{f})=0 \,$, where 
$\emph{f}=(f_1,f_2,f_3)^T$. 
In the three-dimensional case, this condition is equivalent to 
\begin{equation}\label{eq:intfac3}
 \left| \begin{array}{ccc}
 i & j & k \\
 \partial/\partial x_1 & \partial/\partial x_2 & \partial/\partial x_3 \\
 \partial H_i/\partial x_1 & \partial H_i/\partial x_2 & \partial H_i/\partial x_3 \\
 \end{array} \right| = 0,
\end{equation}
where 
\begin{equation}
    \nabla H_i = T_i f \quad(i=1,2). 
\end{equation}
In order to find the first integral $H_1$,
we expand the above expression with respect to the matrix $T_1$ as follows,
\begin{eqnarray} 
\frac{\partial H_1}{\partial x_1} = -R \alpha' f_2 - R \beta' f_3, \label{eq:H1i}\\
\frac{\partial H_1}{\partial x_2} =  R \alpha' f_1 - R \gamma' f_3, \label{eq:H1j}\\
\frac{\partial H_1}{\partial x_3} =  R \beta' f_1 + R \gamma' f_2. \label{eq:H1k}
\end{eqnarray}
and the associated first integral $H_1(x_1,x_2,x_3)$ is given by
\begin{equation}\label{eq:integral3}
H_1(x_1,x_2,x_3)= \int (R \beta' f_1 + R \gamma' f_2) \, d x_3 + h(x_1,x_2),
\end{equation} 
where $h(x_1,x_2)$ is found by imposing (\ref{eq:H1i}) and (\ref{eq:H1j}). The computation
of $H_2$ is completely analogous.

In the following, we shall derive integrals for the cases $e_1, e_2, e_3 \neq 0$; $e_1, e_2 \neq 0, e_3=0$; 
$e_1 \neq 0, e_2=e_3=0$;  and $e_1=e_2=e_3=0$. 

\subsection{The case $e_1,e_2,e_3 \neq 0$}
A bit of algebra shows that if $e_1,e_2,e_3 \neq 0$, the integrating factor matrix $T_2$ does not yield
any solutions, so, in this case, we will only use $T_1$.
We substitute expressions (\ref{eq:H1i}-\ref{eq:H1k}) in the condition (\ref{eq:intfac3}) and obtain
\begin{equation}\label{cond:curl1}
% Important: in this equation (x,y,z) were used instead of x_{1,2,3}! 
   \left( \begin{array}{c}
   \vspace{1mm}
   \displaystyle{\frac{\partial}{\partial x_2}(R \beta' f_1 + R \gamma' f_2)-\frac{\partial}{\partial x_3}(R \alpha' f_1 - R \gamma' f_3)}\\
   \vspace{1mm}
   \displaystyle{\frac{\partial}{\partial x_3}(-R \alpha' f_2 - R \beta' f_3) - \frac{\partial}{\partial x_1}(R \beta' f_1 + R \gamma' f_2)}\\
   \displaystyle{\frac{\partial}{\partial x_1}(R \alpha' f_1 - R \gamma' f_3) - \frac{\partial}{\partial x_2}(-R \alpha' f_2 - R \beta' f_3)} \\
   \end{array} \right) = 0.
\end{equation}
Finally, we substitute the vector field (\ref{eq:lv_e3}) and the function $R$ into the above vector. For the first component we find 
\begin{eqnarray}\label{curl1a}
% divided out a factor, typeset using align
\frac{\beta'x_1}{x_2}[(l_2-1)b_1+(l_2-1)a_{11}x_1+l_2a_{12}x_2+(l_2-1)a_{13}x_3+(l_2-1)\frac{e_1}{x_1}]+ & \nonumber\\
\gamma'[l_2b_2+l_2a_{21}x_1+(l_2+1)a_{22}x_2+l_2a_{23}x_3+(l_2-1)\frac{e_2}{x_2}]- & \nonumber\\
\frac{\alpha'x_1}{x_3}[(l_3-1)b_1+(l_3-1)a_{11}x_1+(l_3-1)a_{12}x_2+l_3a_{13}x_3+(l_3-1)\frac{e_1}{x_1}]+ & \nonumber\\
\gamma'[l_3b_3+l_3a_{31}x_1+l_3a_{32}x_2+(l_3+1)a_{33}x_3+(l_3-1)\frac{e_3}{x_3}]=0 &.
\end{eqnarray}
and the other two components yield similar equations.
We now want to find conditions on the parameters ($\alpha',\beta',\gamma',l_i,a_{ij},b_i,e_i$) such that a solution exists.
The results for this case are summarized in the following lemma.
\begin{lemma}\label{lemma111}
The vector field (\ref{eq:lv_e3}) with $e_1,e_2,e_3\neq 0$ has a first integral in the following cases:
\begin{enumerate}
\item if the conditions $b_1+b_2=0$, $2a_{11}+a_{21}=0$, $2a_{22}+a_{12}=0$, $a_{13}=a_{23}=0$ are satisfied, then 
 the integral is given by
\begin{equation}\label{int111a}
    H=b_1x_1x_2+a_{11}x_1^2x_2-a_{22}x_1x_2^2+e_1x_2-e_2x_1,
\end{equation}
\item if the conditions $b_1+b_2=0$, $b_1+b_3=0$, $2a_{11}+a_{21}=0$, $2a_{11}+a_{31}=0$, $2a_{22}+a_{12}=0$, $2a_{33}+a_{13}=0$, and 
$a_{12}a_{13}+a_{12}a_{23}+a_{13}a_{32}=0$ are satisfied then the integral is given by
\begin{eqnarray}\label{int111b}
    H = &-2b_1a_{33}x_1x_3-2b_1a_{22}x_1x_2-2a_{11}a_{33}x_1^2x_3+2a_{11}a_{22}x_1^2x_2 \nonumber\\
    &+ 2a_{33}^2x_1x_3^2+2a_{22}^2x_1x_2^2+4a_{22}a_{33}x_1x_2x_3 \nonumber\\
    &+2(e_3a_{33}+e_2a_{22})x_1- 2e_1a_{33}x_3+2e_1a_{22}x_2,
\end{eqnarray}
\item if the conditions $b_i=0$ and $a_{ij} = -2a_{jj}$ for $i \neq j$ and $i,j=1,2,3$ are satisfied, then the integral is given by
\begin{eqnarray}\label{int111c}
    H &= a_{11}^2a_{22}x_1^2x_2-a_{11}^2a_{33}x_1^2x_3-a_{11}a_{22}^2x_1x_2^2+a_{11}a_{33}^2x_1x_3^2+ a_{22}^2a_{33}x_2^2x_3 \nonumber\\
    &- a_{22}a_{33}^2x_2x_3^2 + (-a_{11}a_{22}e_2+a_{11}a_{33}e_3)x_1 + (a_{11}a_{22}e_1-a_{22}a_{33}e_3)x_2  \nonumber\\
    &+ (a_{22}a_{33}e_2-a_{11}a_{33}e_1)x_3.
\end{eqnarray}
\end{enumerate}
All other first integrals that can be found using Ansatz (\ref{T1}) are related to these three cases through a permutation
of the coordinates $x_{i}$ and coefficients $\{a_{ij},b_{i},e_{i}\}$.
\end{lemma}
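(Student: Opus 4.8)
The plan is to treat the curl condition (\ref{eq:intfac3}) componentwise as a system of polynomial--in--$x_i$ and Laurent--in--$x_i$ identities, and to extract the constraints on the parameters by matching coefficients of monomials. Concretely, I would start from equation (\ref{curl1a}) and its two cyclic companions. Clearing the denominators $x_2$ and $x_3$ (and similarly $x_1$ and $x_3$, etc.\ in the other components) turns each of the three equations into a genuine polynomial identity in $x_1,x_2,x_3$ that must vanish identically. Since the $l_i$ and $\alpha',\beta',\gamma'$ are as yet free, the monomial coefficients give a large but finite system of algebraic equations in the unknowns $\{\alpha',\beta',\gamma',l_1,l_2,l_3\}$ together with the vector-field parameters $\{a_{ij},b_i,e_i\}$. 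Because all three $e_i$ are nonzero, the pure constant terms $e_i/x_i$ inside the brackets survive and force certain $(l_j-1)$ combinations; this is exactly what pins down $l_1=l_2=l_3=1$ (or its analogues) in the generic branch, which is why only $T_1$, not $T_2$, produces solutions here.

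The key steps, in order: (i) Substitute $R=x_1^{l_1-1}x_2^{l_2-1}x_3^{l_3-1}$ and the LV right-hand sides into (\ref{cond:curl1}), obtaining the three scalar equations of which (\ref{curl1a}) is the first. (ii) Multiply through to clear negative powers and collect coefficients of each monomial $x_1^{p}x_2^{q}x_3^{r}$ (with $p,q,r$ ranging over the small set that actually appears); set each coefficient to zero. (iii) First solve the subsystem coming from the terms proportional to $e_i$: these are linear in the $(l_j-1)$ and decouple nicely, yielding the admissible values of the exponents $l_i$ and splitting the analysis into the three enumerated cases according to how many of the off-diagonal interaction blocks are forced to vanish. (iv) For each such case, feed the resulting $l_i$ back into the remaining coefficient equations; these then become linear in $\{\alpha',\beta',\gamma'\}$ with coefficients polynomial in $\{a_{ij},b_i\}$, and demanding a nontrivial solution for $(\alpha',\beta',\gamma')$ produces the stated parameter constraints (e.g.\ $b_1+b_2=0$, $2a_{11}+a_{21}=0$, $a_{13}=a_{23}=0$ in case (i), and the longer lists in cases (ii) and (iii)). (v) Finally, with a valid $(\alpha',\beta',\gamma')$ and a valid parameter set in hand, integrate (\ref{eq:integral3}): compute $\int(R\beta' f_1+R\gamma' f_2)\,dx_3$, then fix $h(x_1,x_2)$ by imposing (\ref{eq:H1i}) and (\ref{eq:H1j}); the resulting $H$ is the claimed integral, which can be verified directly by checking $f\cdot\nabla H=0$. (vi) The closing sentence of the lemma is handled by observing that any relabeling of the variables $x_i$ carries one admissible $(\alpha',\beta',\gamma')$-pattern to another, so the three cases listed exhaust the orbit under $S_3$.

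The main obstacle I anticipate is bookkeeping rather than conceptual: the coefficient-matching system in step (ii) is sizeable (each of the three cyclic equations contributes on the order of a dozen monomials once denominators are cleared), and one must be careful about degenerate sub-branches --- e.g.\ whether some $a_{ij}$ or $l_i$ vanishes, whether $\alpha'$, $\beta'$ or $\gamma'$ is zero, and when the log-type integrals would replace the power-law ones (though with all $e_i\neq0$ the exponents are forced to $1$ and no logarithms arise). Keeping the case split clean --- i.e.\ showing that the three enumerated cases are genuinely the only ones, modulo permutation, and that none of them is empty --- is where the argument needs the most care. A secondary, purely mechanical, point is that the verification in step (v) for case (ii) and especially case (iii), with their quadratic-in-$a_{ij}$ coefficients, is tedious; I would present only the integrand and the final $H$, and note that $f\cdot\nabla H\equiv0$ is a direct computation under the stated constraints.
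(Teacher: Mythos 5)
Your proposal follows essentially the same route as the paper's proof: substitute the Ansatz into the curl condition, match monomial coefficients to force $l_1=l_2=l_3=1$, reduce the remaining conditions to the linear relations in $(\alpha',\beta',\gamma')$, case-split according to which of $\alpha',\beta',\gamma'$ are nonzero (with nontrivial solvability of the resulting homogeneous system yielding conditions such as $a_{12}a_{13}+a_{12}a_{23}+a_{13}a_{32}=0$), integrate (\ref{eq:integral3}) to obtain $H$, and dispose of the other sign patterns by permutation of coordinates and coefficients. The only cosmetic difference is that the paper pins down the exponents from the $x_i^2/x_j$ terms rather than the $e_i/x_i$ terms, and the three enumerated cases arise from the vanishing pattern of $(\alpha',\beta',\gamma')$ rather than from the $e_i$-subsystem itself; neither point changes the argument.
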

\begin{proof}
First considering terms proportional to $x_{i}^2/x_{j}$ for $i\neq j$ we find that $l_1=l_2=l_3=1$.
Consequently, we get the following conditions on the parameters,
\begin{eqnarray}
\alpha'(b_1+b_2)=0, \label{cond0a}\\
\beta'(b_1+b_3)=0, \label{cond0b}\\
\gamma'(b_2+b_3)=0, \label{cond0c}\\
\alpha'(2a_{11}+a_{21})=0, \label{cond0d}\\
\alpha'(2a_{22}+a_{12})=0, \label{cond0e}\\
\beta'(2a_{11}+a_{31})=0, \label{cond0f}\\
\beta'(2a_{33}+a_{13})=0, \label{cond0g}\\
\gamma'(2a_{22}+a_{32})=0, \label{cond0h}\\
\gamma'(2a_{33}+a_{23})=0, \label{cond0i}\\
-\alpha' a_{13}+\beta' a_{12}+\gamma'(a_{21}+a_{31})=0, \label{cond0j}\\
\alpha' a_{23}+\beta'(a_{12}+a_{32})+\gamma' a_{21}=0, \label{cond0k}\\
\alpha'(a_{13}+a_{23})+\beta' a_{32}-\gamma' a_{31}=0\label{cond0l}.
\end{eqnarray}
\begin{enumerate}
\item We start with the case where  $\alpha' \neq 0, \beta'=\gamma'=0$. From (\ref{cond0a}-\ref{cond0l}) the following conditions immediately apply:
\begin{eqnarray}
    &b_1+b_2=0, 2a_{11}+a_{21}=0, \label{cond12}\\
    &2a_{22}+a_{12}=0, a_{13}=a_{23}=0, \label{cond14}
\end{eqnarray}
and using (\ref{eq:integral3}) we obtain the first integral (\ref{int111a}). 
\item We turn to the case 
where $\alpha',\beta' \neq 0, \gamma' = 0$. From (\ref{cond0a}-\ref{cond0i}), the following conditions 
immediately apply,
\begin{eqnarray}
    &b_1+b_2=0, 2a_{11}+a_{21}=0, 2a_{11}+a_{31}=0, \\
    &b_1+b_3=0, 2a_{22}+a_{12}=0, 2a_{33}+a_{13}=0. 
\end{eqnarray}
While, $\alpha'$ and $\beta'$ can be computed from the following linear homogeneous equations due to (\ref{cond0j}-\ref{cond0l}), 
\begin{eqnarray}
    \alpha' a_{13} - \beta' a_{12} = 0, \label{lin1}\\
    \alpha' a_{23} + \beta' (a_{12}+a_{32}) = 0, \label{lin2} \\
    \alpha' (a_{13}+a_{23}) + \beta' a_{32} = 0. \label{lin3}
\end{eqnarray}
The solvability condition of the linear system above is given by, 
\begin{equation}\label{cond21}
    a_{12}a_{13}+a_{12}a_{23}+a_{13}a_{32}=0.
\end{equation}
If the above condition is satisfied, we obtain the first integral (\ref{int111b}) due to (\ref{eq:integral3}). 
\item Finally we consider the case $\alpha',\beta',\gamma' \neq 0$. From the equations (\ref{cond0a}-\ref{cond0i}), we have
\begin{eqnarray}
    b_1+b_2=0, 2a_{11}+a_{21}=0, 2a_{22}+a_{12}=0,\\
    b_1+b_3=0, 2a_{11}+a_{31}=0, 2a_{33}+a_{13}=0,\\
    b_2+b_3=0, 2a_{22}+a_{32}=0, 2a_{33}+a_{23}=0.
\end{eqnarray}
We simplify the equations above to
\begin{equation}\label{subsi1}
b_i=0 \quad \mbox{and} \quad a_{ij} = -2a_{jj}, \quad i \neq j,\,\, (i,j=1,2,3).
\end{equation}
The parameters $\alpha',\beta'$ and $\gamma'$ can be computed from the equations (\ref{cond0j}-\ref{cond0l}), 
giving us the following homogenous linear system,
\begin{equation}\label{linsys2}
    \left( \begin{array}{ccc}
      -a_{13} & a_{12} & (a_{21}+a_{31}) \\
      a_{23} & (a_{12}+a_{32}) & a_{21} \\
      (a_{13}+a_{23}) & a_{32} & -a_{31} \\
    \end{array} \right) 
    \left( \begin{array}{c}
      \alpha \\
      \beta \\
      \gamma \\
    \end{array} \right) = 0.
\end{equation}
Substituting (\ref{subsi1}) to the above linear system, we have the following solutions,
\begin{equation}
    \alpha=a_{11}a_{22}\mu,\quad \beta=-a_{11}a_{33}\mu,\quad \gamma=a_{22}a_{33}\mu.
\end{equation}
Then if all the parameters of the Lotka-Volterra systems (\ref{eq:lv_e3}) satisfy conditions above, the system (\ref{eq:lv_e3}) admits the first integral (\ref{int111c}).
\end{enumerate} 
We have ordered the solutions according to $\alpha'\neq 0$, $\alpha', \beta'\neq 0$ and $\alpha', \beta', \gamma'\neq 0$. A straightforward
computation shows that any other combination yields one of the  first integrals (\ref{int111a}-\ref{int111c})  with the coordinates and coefficients permuted. For instance, if we
take $\alpha'= \gamma'= 0$ and $\beta'\neq 0$, we find the first integral (\ref{int111a}) with the indices permuted according to $\{1,2,3\}\rightarrow \{1,3,2\}$.\qedhere
\end{proof} 

We remark that the first integral given in Lemma \ref{lemma111} point 1 is the same integral that is given in the two-dimensional case. This can be guessed as the first two equations of (\ref{eq:lv_e3}) are independent of $x_3$.

\vspace{0.5cm}

In the next three subsections, we now turn to the case where there is at least one constant term equal to zero. 
As it turns out, the first integrals found with Ansatz (\ref{T1}) can all be obtained as limits of the results
given in Lemma \ref{lemma111}. New first integrals can be found using Ansatz (\ref{T2}). Condition (\ref{eq:intfac3}) leads to
\begin{eqnarray}
B_1 l_1+B_2 l_2&=0, \label{cond6a}\\
B_3 l_1+B_2 l_3&=0, \label{cond6b}\\
B_3 l_2-B_1 l_3&=0, \label{cond6c}\\
A_{13} l_1+A_{23} l_2&=0, \label{cond6d}\\
A_{32} l_1+A_{22} l_3&=0, \label{cond6e}\\
A_{31} l_2-A_{11} l_3&=0, \label{cond6f}\\
A_{11} l_1+A_{21} l_2&=-A_{11}, \label{cond6g}\\
A_{12} l_1+A_{22} l_2&=-A_{22}, \label{cond6h}\\
A_{31} l_1+A_{21} l_3&=-A_{31}, \label{cond6i}\\
A_{33} l_1+A_{23} l_3&=-A_{23}, \label{cond6j}\\
A_{32} l_2-A_{12} l_3&=-A_{32}, \label{cond6k}\\
A_{33} l_2-A_{13} l_3&=A_{13}, \label{cond6l} \\
\gamma e_2(l_2-1)/x_2 &= \gamma e_3(l_3-1)/x_3 = (\beta l_2 - \alpha l_3)e_1/x_1 = 0,\label{cond6m}\\
\beta e_1(l_1-1)/x_1  &= \beta e_3(l_3-1)/x_3 = (\alpha l_3 + \gamma l_1)e_2/x_2 = 0,\label{cond6n}\\
\alpha e_1(l_1-1)/x_1 &= \alpha e_2(l_2-1)/x_2 = (\beta l_2 - \gamma l_1)e_3/x_3 = 0. \label{cond6o}
\end{eqnarray}
Here, we have introduced
\begin{eqnarray}\label{term_table}
B_1  &=b_1\alpha-b_3\gamma &\qquad  A_{1i}=a_{1i}\alpha-a_{3i}\gamma \nonumber\\
B_2  &=b_2\alpha+b_3\beta  &\qquad  A_{2i}=a_{2i}\alpha+a_{3i}\beta \nonumber\\
B_3  &=b_1\beta+b_2\gamma  &\qquad  A_{3i}=a_{1i}\beta+a_{2i}\gamma.
\end{eqnarray}
Note that 
\begin{eqnarray*}
B_1 \beta + B_2 \gamma-B_3 \alpha &=0 \\
A_{1i} \beta + A_{2i} \gamma-A_{3i} \alpha &=0 \quad \forall i.
\end{eqnarray*}

\subsection{The case $e_1,e_2\neq 0, e_3=0$}
Given that $e_1,e_2\neq 0, e_3=0$, we conclude that $l_1=l_2=1$ due to conditions (\ref{cond6m}-\ref{cond6o}), 
and $l_3$ must satisfy the following equations:
\begin{equation}
    \beta - \alpha l_3 = \alpha l_3 + \gamma = 0.
\end{equation}
The results are described in the following lemma. We omit the proof, which is straightforward and analogous to that
of Lemma \ref{lemma111}.
\begin{lemma}\label{lemma110}
The vector field (\ref{eq:lv_e3}) with $e_1,e_2\neq 0, e_3=0$ 
%satisfies the conditions indicated in the table \ref{cond1_table} then it has the following corresponding functions as a first integral respectively,
has the following first integrals:
\begin{enumerate}
\item $H=b_1x_1x_2+a_{11}x_1^2x_2-a_{22}x_1x_2^2+e_1x_2-e_2x_1$, under conditions \\
$b_1+b_2=2a_{11}+a_{21}=2a_{22}+a_{12}=0$ and $a_{13}=a_{23}=0$;
\item $H=(a_{13}-a_{23})x_1x_2x_3^2/2 + e_1 x_2x_3 - e_2 x_1x_3$, under conditions \\
$a_{13}-a_{23}\neq 0$, $b_1+b_3=b_2+b_3=0$, $a_{21}+a_{31}=a_{11}-a_{21}=0$,\\
$a_{22}+a_{32}=a_{12}-a_{32}=0$, and $a_{13}+a_{23}+2a_{33}=0$;
\item $H=(e_1 x_2 - e_2 x_1)x_3^{l_3}$,
where $l_3$ is a solution of 
\[
b_1-b_3l_3=a_{11}+a_{31}l_3=a_{12}+a_{32}l_3=a_{13}+a_{33}l_3=0.
\]
under conditions
$b_3^2+a_{31}^2+a_{32}^2+a_{33}^2\neq 0$, $b_1-b_2=0$, $a_{1i}-a_{2i}=0$\\
and $b_3a_{1i}-b_1a_{3i}=0\ \mbox{for}\ i=1,2,3$
\end{enumerate}
These are all first integrals that can be found using Ansatz (\ref{T2}). 
\end{lemma}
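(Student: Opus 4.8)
The plan is to mimic the structure of the proof of Lemma \ref{lemma111}, adapted to the second Ansatz $T_2$. First I would carry out the curl computation: substitute the entries of $T_2$ from (\ref{T2}), the vector field (\ref{eq:lv_e3}), and $R = x_1^{l_1-1}x_2^{l_2-1}x_3^{l_3-1}$ into the condition $\curl(T_2 f)=0$, i.e. (\ref{eq:intfac3}) with $\nabla H_2 = T_2 f$. Collecting powers of the monomials $x_i^{p}x_j^{q}x_k^{r}$ (including the negative powers coming from the constant terms $e_i/x_i$) yields exactly the system (\ref{cond6a}--\ref{cond6o}) together with the auxiliary identities following (\ref{term_table}); this part is routine but bookkeeping-heavy. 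Since we are in the subcase $e_1,e_2\neq 0$, $e_3=0$, conditions (\ref{cond6m}--\ref{cond6o}) force $l_1=l_2=1$ (the nonzero $e_1,e_2$ kill the $(l_1-1)$ and $(l_2-1)$ factors), and the surviving relations among $\alpha,\beta,\gamma,l_3$ are $\beta-\alpha l_3=0$ and $\alpha l_3+\gamma=0$, as stated in the text just before the lemma.

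Next I would substitute $l_1=l_2=1$ into (\ref{cond6a}--\ref{cond6l}) and (\ref{term_table}) and then split into cases according to which of $\alpha,\beta,\gamma$ vanish, exactly as in Lemma \ref{lemma111}. Using $\beta=\alpha l_3$ and $\gamma=-\alpha l_3$, the whole family is governed by $\alpha$ and $l_3$. The case $\alpha=0$ forces $\beta=\gamma=0$, giving $T_2\equiv 0$ and only the degenerate first integral already covered by $T_1$ (namely item 1, which coincides with the two-dimensional integral since the first two components of (\ref{eq:lv_e3}) do not involve $x_3$); this reproduces (\ref{int111a}) and matches item 1 of the lemma. For $\alpha\neq 0$ one has $\beta=-\gamma=\alpha l_3$, and I would distinguish the subcase $l_3\neq 0$ from $l_3=0$. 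When $l_3\neq 0$, dividing the linear relations (\ref{cond6a}--\ref{cond6l}) by the common factor $\alpha$ and simplifying with $\beta=-\gamma$ produces the constraints $b_1+b_3=b_2+b_3=0$, the relations among the $a_{ij}$ listed in item 2, and $a_{13}+a_{23}+2a_{33}=0$; the solvability of the remaining relations fixes $l_3=1$ (equivalently $\beta=-\gamma=\alpha$), and integrating (\ref{eq:integral3}) with these values gives $H=(a_{13}-a_{23})x_1x_2x_3^2/2+e_1x_2x_3-e_2x_1x_3$, i.e. item 2, with the nondegeneracy condition $a_{13}-a_{23}\neq 0$ ensuring the integral is nonconstant. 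When instead the parameters do not force $l_3=1$, the relations (\ref{cond6a}--\ref{cond6l}) collapse to $b_1-b_2=0$, $a_{1i}-a_{2i}=0$, the proportionality $b_3a_{1i}-b_1a_{3i}=0$, and $l_3$ is pinned down by $b_1-b_3l_3=a_{11}+a_{31}l_3=a_{12}+a_{32}l_3=a_{13}+a_{33}l_3=0$ (with $b_3^2+a_{31}^2+a_{32}^2+a_{33}^2\neq 0$ so that such an $l_3$ is determined); integrating (\ref{eq:integral3}) then yields $H=(e_1x_2-e_2x_1)x_3^{l_3}$, which is item 3. Finally I would check that any permutation of $\{1,2,3\}$ consistent with the constraint $e_3=0$ reproduces one of these three forms, so the list is exhaustive among integrals obtainable from $T_2$.

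I expect the main obstacle to be the case analysis for $\alpha\neq 0$: one must carefully extract from the twelve linear relations (\ref{cond6a}--\ref{cond6l}), together with $\beta=\alpha l_3$ and $\gamma=-\alpha l_3$, precisely the two distinct solution branches (item 2 with $l_3$ effectively $1$, and item 3 with a variable exponent $l_3$) and verify that no further branches arise. In particular, distinguishing when the linear system on $(\alpha,\beta,\gamma)$ and the auxiliary equations on $l_3$ are simultaneously consistent — and tracking the nondegeneracy hypotheses ($a_{13}-a_{23}\neq 0$ in item 2, $b_3^2+a_{31}^2+a_{32}^2+a_{33}^2\neq 0$ in item 3) that prevent the resulting $H$ from being constant — requires care. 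The actual integrations via (\ref{eq:integral3}), and the verification that the resulting $h(x_1,x_2)$ from imposing the analogues of (\ref{eq:H1i}) and (\ref{eq:H1j}) is consistent, are mechanical once the parameter constraints are in hand, so I would state them briefly rather than in full, which is presumably why the authors write ``we omit the proof''.
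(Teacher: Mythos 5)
The paper itself omits the proof of this lemma (it only records that $l_1=l_2=1$ and $\beta-\alpha l_3=\alpha l_3+\gamma=0$, and says the rest is analogous to Lemma~\ref{lemma111}), and your overall plan --- derive (\ref{cond6a})--(\ref{cond6o}) from $\curl(T_2f)=0$, use $e_1,e_2\neq0$, $e_3=0$ to force $l_1=l_2=1$ and $\beta=\alpha l_3$, $\gamma=-\alpha l_3$, then split into cases --- is indeed the intended route. However, your case analysis has a genuine error. You assign item 1 to the branch $\alpha=0$: it is true that $\alpha=0$ then forces $\beta=\gamma=0$, but in that case $T_2\equiv0$, so $\nabla H=T_2f=0$ and this branch produces no first integral at all; it cannot ``reproduce'' (\ref{int111a}), and observing that the integral is already obtainable from $T_1$ does not repair this, since the lemma explicitly lists item 1 among the integrals obtained from Ansatz (\ref{T2}). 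Item 1 actually comes from the subcase you announce ($\alpha\neq0$, $l_3=0$) but never carry out: there $\beta=\gamma=0$, yet the entries $\mp\alpha x_3$ of $T_2$ survive and $R=x_3^{-1}$ cancels the factor $x_3$, so $\nabla H=\alpha(-f_2,f_1,0)$; conditions (\ref{cond6j}) and (\ref{cond6l}) reduce to $\alpha a_{23}=0$ and $\alpha a_{13}=0$, giving $a_{13}=a_{23}=0$, while (\ref{cond6a}), (\ref{cond6g}), (\ref{cond6h}) give $b_1+b_2=2a_{11}+a_{21}=a_{12}+2a_{22}=0$, and integrating yields exactly item 1. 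As written, your argument both misattributes item 1 and leaves the $l_3=0$ subcase unresolved, so in particular the closing claim that these are \emph{all} integrals obtainable from (\ref{T2}) is not established.

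A secondary weakness: in the branch $\alpha\neq0$, $l_3\neq0$ you assert that ``solvability fixes $l_3=1$'' (item 2) and then separate item 3 only by the vague clause ``when the parameters do not force $l_3=1$''. The real dichotomy is between the nondegenerate and degenerate ways of solving the linear relations (\ref{cond6a})--(\ref{cond6l}) after substituting $l_1=l_2=1$ and $\beta=-\gamma=\alpha l_3$: one branch pins $l_3$ and the coefficients as in item 2, while the other requires the proportionalities $b_1=b_2$, $a_{1i}=a_{2i}$ and $b_3a_{1i}=b_1a_{3i}$, with $l_3$ then determined by the remaining equations as in item 3; this needs to be exhibited explicitly rather than asserted. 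Finally, for the exhaustiveness claim you must also dispose of the branches with $\alpha=0$ but $\beta\neq0$ (or $\gamma\neq0$), for which (\ref{cond6m})--(\ref{cond6o}) give $(l_1,l_2)=(1,0)$ (respectively $(0,1)$) rather than $(1,1)$; the reduction to $l_1=l_2=1$ tacitly uses $\alpha\neq0$, and a permutation check alone does not cover these possibilities.
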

We also remark that the first integral given in Lemma \ref{lemma110} point 1 is trivial as it follows directly from the two-dimensional case.

\subsection{The case $e_1\neq 0, e_2=e_3=0$}
In this case, it immediately follows that $l_1=1$ due to conditions (\ref{cond6n}) and (\ref{cond6o}), 
and we have also the following equation to satisfy condition (\ref{cond6m}),
\begin{equation}
    \beta l_2 - \alpha l_3 = 0. 
\end{equation}
All results for this case are given in the following lemma,
\begin{lemma}\label{lemma100}
The vector field (\ref{eq:lv_e3}) with $e_1\neq 0, e_2=e_3=0$ 
has the following first integrals:
\begin{enumerate}
\item[1.] $H= -B_2x_1 - A_{21}x_1^2/2  + \alpha e_1 \ln |x_2| + \beta e_1 \ln |x_3|$, 
where $\alpha,\beta$ are solutions of $A_{22} = 0$, $A_{23}=0$, under conditions\\
$b_1=a_{11}=a_{12}=a_{13}=0$ and $a_{22}a_{33}-a_{23}a_{32}=0$;
\item[2.] $H= -a_{23}x_1x_3+e_1 \ln|x_2|$, under conditions\\
$b_2=a_{21}=a_{22}=0$, $a_{23}\neq0$ and $b_1+b_3=a_{1i}+a_{3i}=0 \ \mbox{for} \ i=1,2,3$;
\item[3.] $H=\beta \ln|x_3| + \alpha \ln|x_2|$, where $\alpha,\beta$ are solutions of $B_2=0$ and $A_{2i}=0$,\\
for $i=1,2,3$ under the condition $(b_2,a_{21},a_{22},a_{23})^T=\lambda (b_3,a_{31},a_{32},a_{33})^T$ for some $\lambda\in\mathbb{R}$;
\item[4.] $H=A_{33}x_1x_3 + \beta e_1 \ln|x_3|-\gamma e_1 \ln|x_2|$, where $\beta,\gamma$ are solutions of 
$ B_3=0$, $A_{31}=0$, $A_{32}=0$, under conditions\\
$A_{33}\neq0$, $b_1+b_3=a_{1i}+a_{3i}=0$ for $i=1,2,3$ and $(b_1,a_{11},a_{12})^T=\lambda (b_2,a_{21},a_{22})$ for some $\lambda\in\mathbb{R}$;
\item[5.] $H=(a_{13}+a_{23})x_3-(a_{12}+a_{32})x_2 + e_1 \ln|x_3| - e_1 \ln|x_2|$, under conditions\\
$a_{12}+a_{32}\neq0$, $a_{13}+a_{23}\neq 0$, $b_1+b_2=b_1+b_3=0$,
$a_{11}+a_{21} = a_{11}+a_{31}=0$ and $a_{12}+a_{22}=a_{13}+a_{33}=0$;
\item[6.] $H=-a_{23}x_1x_2^{l_2}x_3 + e_1 x_2^{l_2} / l_2$, where $l_2=-(a_{13}+a_{33})/a_{23}$, under conditions\\
$a_{23}\neq0$, $a_{13}+a_{33}\neq0$, $b_1=a_{21}=a_{22}=0$ and $b_1+b_3=a_{11}+a_{31}=a_{12}+a_{32}=0$;
\item[7.] $H=(b_1+a_{11}x_1)x_1x_2^{l_2}x_3^{l_3} + e_1 x_2^{l_2}x_3^{l_3}$, where $l_2,l_3$ are solutions of
$b_2 l_2 + b_3 l_3 = -b_1$, $a_{21} l_2 + a_{31} l_3 = -2a_{11}$, 
$a_{22} l_2 + a_{32} l_3 = a_{23} l_2 + a_{33} l_3=0$, under conditions\\
$a_{12}=a_{13}=0$, $a_{22}(-b_1a_{31}+2b_3a_{11})+a_{32}(-2b_2a_{11}+b_1a_{21})=0$
and $a_{23}(-b_1a_{31}+2b_3a_{11})+a_{33}(-2b_2a_{11}+b_1a_{21})=0$;
\item[8.] $H=(a_{13}+a_{33})x_1x_2^{l_2}x_3^{l_3+1} + e_1 x_2^{l_2}x_3^{l_3} $, 
where $l_2,l_3$ are given below 
\[
    l_2 = \frac{\gamma(a_{13}+a_{33})}{A_{23}} \quad l_3 = -\frac{\beta(a_{13}+a_{33})}{A_{23}},
\]
and $\beta,\gamma$ are solutions of equations $B_3=0$, $A_{31}=0$, $A_{32}=0$ under the conditions\\
$A_{33}\neq0$, $A_{23}\neq0$, $a_{13}+a_{33}\neq0$, $b_1+b_3=a_{11}+a_{31}=a_{12}+a_{32}=0$ and $(b_1,a_{11},a_{12})^T=\lambda
(b_2,a_{21},a_{22})^T$ for some $\lambda\in\mathbb{R}$;
\item[9.] $H=((a_{12}+a_{22})x_2/l_3 + ( a_{13}+a_{23})x_3/(l_3+1)  )x_1x_2^{l_2}x_3^{l_3} + e_1 x_2^{l_2}x_3^{l_3}/l_3$, where $l_2=-(a_{12}+a_{22})/(a_{22}-a_{32})$ and $l_3=-l_2$, under conditions\\
$a_{12}+a_{22}\neq0$, $a_{22}-a_{32}\neq0$, $a_{13}+a_{33}\neq0$, $a_{23}-a_{33}\neq0$,
$b_1+b_3=b_2-b_3=0$, $a_{11}+a_{31}=a_{21}-a_{31}=0$
and $(a_{13}+a_{33})(a_{22}-a_{32})-(a_{12}+a_{22})(a_{23}-a_{33})=0$.
\end{enumerate}
\end{lemma}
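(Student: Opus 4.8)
The plan is to follow the template used for Lemma~\ref{lemma111}, but now with Ansatz~(\ref{T2}) in place of~(\ref{T1}). First I would insert $T_2$, the function $R=x_1^{l_1-1}x_2^{l_2-1}x_3^{l_3-1}$ and the vector field~(\ref{eq:lv_e3}) into the curl condition~(\ref{eq:intfac3}); after factoring out the common power of $R$, each of the three components becomes a Laurent polynomial in $x_1,x_2,x_3$, and setting every coefficient to zero reproduces exactly the system~(\ref{cond6a})--(\ref{cond6o}) together with the abbreviations~(\ref{term_table}). Specialising to $e_1\neq 0$, $e_2=e_3=0$ kills every monomial carrying $e_2$ or $e_3$ in lines~(\ref{cond6m})--(\ref{cond6o}), while the surviving $e_1$-terms force $\beta(l_1-1)=0$, $\alpha(l_1-1)=0$ and $\beta l_2-\alpha l_3=0$; hence $l_1=1$ (as recorded in the text preceding the lemma) and the single extra relation $\beta l_2=\alpha l_3$ must hold.

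What then remains is a finite case distinction governed by (i)~which of $\alpha,\beta,\gamma$ vanish, (ii)~the rank of the homogeneous linear subsystems hidden in~(\ref{cond6a})--(\ref{cond6l}) that fix the surviving Greek parameters, and (iii)~whether the exponents $l_2,l_3$ emerging from those equations hit the special values $0$ or $-1$, at which a monomial in the integral degenerates into a logarithm. For each admissible branch I would (a)~extract the constraints on $\{a_{ij},b_i\}$ as the solvability condition (orthogonality to the left null space) of the relevant linear system, in the same spirit as in Section~\ref{2d}; (b)~solve for $l_2,l_3$ and for the ratio $(\alpha:\beta:\gamma)$; and (c)~recover $H$ by integrating the $T_2$-analogue of~(\ref{eq:integral3}), i.e.\ $\partial H/\partial x_3=R(\beta x_2 f_1+\gamma x_1 f_2)$ (or whichever component integrates most cleanly), adding an unknown function of the other two variables and pinning it down from the remaining two rows of $\nabla H=T_2 f$. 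Done branch by branch this yields items~1--9: the purely logarithmic integrals~1--5 come from the branches in which $l_2$ and/or $l_3$ equal $0$ or $-1$, the mixed power/logarithm integrals~6--9 from the generic-exponent branches.

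The last thing to check is that the list is complete and non-redundant. Here I would enumerate the branches lexicographically by the zero pattern of $(\alpha,\beta,\gamma)$, echoing the ordering ``$\alpha'\neq0$; $\alpha',\beta'\neq0$; $\alpha',\beta',\gamma'\neq0$'' used in Lemma~\ref{lemma111}, and note that, in contrast with the $T_1$ analysis, the equations are \emph{not} invariant under permuting the indices (since $e_2=e_3=0$ already distinguishes $x_1$), so each branch genuinely needs its own treatment and no branch may be obtained from another by relabelling. Branches that force a contradiction, or that only deliver a trivial integral such as $H=x_1$, are discarded. I expect the main obstacle to be exactly this bookkeeping: within each zero pattern one must make sure that every compatible choice of ``which equation is solved for which exponent'' has been examined, and that the degenerate sub-branches ($l_2\in\{0,-1\}$, $l_3\in\{0,-1\}$, or the integration in step~(c) producing a resonant $x_3$-power that must be logged) are neither overlooked nor counted twice. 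Everything downstream of the case split is routine integration and differentiation, which is why the proof is omitted from the main text.
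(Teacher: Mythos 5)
Your proposal follows essentially the same route as the paper: the paper's (omitted) argument is exactly the specialization of the Ansatz-(\ref{T2}) conditions (\ref{cond6a})--(\ref{cond6o}) to $e_1\neq 0$, $e_2=e_3=0$ (giving $l_1=1$ and $\beta l_2-\alpha l_3=0$), followed by the same branch-by-branch analysis over the zero pattern of $(\alpha,\beta,\gamma)$, solvability of the resulting linear systems, degenerate exponents $l_i\in\{0,-1\}$ yielding logarithms, and recovery of $H$ from $\nabla H=T_2f$. No substantive difference from the paper's treatment.
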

We note that the integrals in the lemma \ref{lemma100} point 3 and 5 do not depend on the variable $x_1$. They immediately follow from the two-dimensional case. 

\subsection{The case $e_1=e_2=e_3=0$}
Finally, we shall discuss the case where all the constant terms are zero. The equations (\ref{cond6m} - \ref{cond6o}) 
are satisfied immediately. This means we only need to find conditions on the parameters in order to satisfy equations
(\ref{cond6a} - \ref{cond6l}). 

In the following lemma, we describe our results obtained using an integrating factor matrix of the form $T_2$.
\begin{lemma}\label{lemma000}
The vector field (\ref{eq:lv_e3}) with $e_1=e_2=e_3=0$ has the following first integrals:
\begin{enumerate}
\item[1.] $H=\alpha(b_1\ln|x_2|-b_2\ln|x_1|-a_{21}x_1)+\beta(b_1\ln|x_3|-b_3\ln|x_1|-a_{31}x_1)$ where $\alpha,\beta$ are 
          solutions of equations $A_{22}=0$ and $A_{23}=0$, under conditions\\
	  $b_1\neq0$, $a_{11}=a_{12}=a_{13}=0$ and $a_{22}a_{33}-a_{32}a_{23}=0$;
\item[2.] $H=\alpha(b_2/x_1-a_{21}\ln|x_1|+a_{11}\ln|x_2|)+\beta(b_3/x_1-a_{31}\ln|x_1|+a_{11}\ln|x_3|)$ where 
          $\alpha,\beta$ are solutions of equations $A_{22}=0$, $A_{23}=0$, under conditions\\
	  $a_{11}\neq0$, $b_1=a_{12}=a_{13}=0$ and $a_{22}a_{33}-a_{32}a_{23}=0$;
\item[3.] $H=A_{31}\ln|x_3|+A_{11}\ln|x_2|-A_{21}\ln|x_1|+A_{22}x_2/x_1$ where $\alpha,\beta,\gamma$ are solutions of 
          $B_1=0$, $A_{13}=0$ and $\beta+\gamma=0$, under conditions\\
	  $A_{11}^2+A_{31}^2\neq0$, $A_{22}\neq0$, $b_1-b_2=a_{12}-a_{22}=a_{13}-a_{23}=0$ and $b_1a_{33}-b_3a_{13}=0$;
\item[4.] $H=-(a_{11}-a_{21})\ln|x_3|+(a_{11}-a_{31})\ln|x_2|-(a_{21}-a_{31})\ln|x_1|+(a_{22}-a_{32})x_2/x_1-(a_{13}-a_{23})x_3/x_1$, 
	  under conditions\\
	  $(a_{11}-a_{31})^2+(a_{11}-a_{21})^2\neq0$, $a_{22}-a_{32}\neq0$, $a_{23}-a_{33}\neq0$ and $b_1-b_3=b_1-b_2=a_{12}-a_{22}=a_{13}-a_{33}=0$;
\item[5.] $H=x_1^{\beta}x_2^{\gamma}$, where $\beta,\gamma$ are solutions of $B_3=0$, 
          $A_{31}=0$, $A_{32}=0$ and  $A_{33}=0$, under conditions\\
	  $(b_1,a_{11},a_{12},a_{13})^T=\lambda(b_2,a_{21},a_{22},a_{23})^T$ for some $\lambda \in \mathbb{R}$;
\item[6.] $H=x_1^{\beta}x_2^{\gamma+\alpha}x_3^{\beta}$, where $\alpha,\beta,\gamma$ are solutions of 
          $B_2=0$, $B_3=0$, $A_{2i}=0$ and $A_{3i}=0$ ($i=1,2,3$), under conditions\\
	  $(b_1,a_{11},a_{12},a_{13})^T=\lambda_1(b_2,a_{21},a_{22},a_{23})^T$ and\\
          $(b_2,a_{21},a_{22},a_{23})^T=\lambda_2(b_3,a_{31},a_{32},a_{33})^T$ for some $\lambda_1, \lambda_2 \in \mathbb{R}$;
\item[7a.] $H=x_1^{l_1}x_2^{l_2}(A_{12}x_2/(l_2+1)+A_{33}x_3)$ where $l_1=-\beta(-a_{23}+a_{33})/A_{33}$,  
           $l_2=-A_{13}/A_{33}$ and $\alpha,\beta,\gamma$ are solutions of $B_3=0$, $A_{31}=0$, $A_{32}=0$ and $\alpha+\beta=0$, under conditions\\
	   $A_{33}\neq0$, $A_{13}\neq0$, $A_{23}\neq0$, $A_{12}\neq0$, $b_2-b_3=a_{21}-a_{31}=0$ and 
           $(b_1,a_{11},a_{12})^T=\lambda(b_2,a_{21},a_{22})^T$ for some $\lambda \in \mathbb{R}$;
\item[7b.] $H=x_1^{l_1}x_2^{l_2}(-b_3\beta/l_1+A_{33}x_3)$ where $l_1=-a_{33}\beta/A_{33}$, $l_2=-(a_{33}\gamma)/A_{33}$ 
           and $\beta,\gamma$ are solutions of $B_3=0$, $A_{31}=0$, $A_{32}=0$, under conditions\\
	   $b_3\neq0$, $a_{33}\neq0$, $A_{33}\neq0$, $a_{31}=a_{32}=0$, and\\ 
           $(b_1,a_{11},a_{12})^T=\lambda(b_2,a_{21},a_{22})^T$ for some $\lambda \in \mathbb{R}$;
\item[7c.] $H=x_1^{l_1}x_2^{l_2}((a_{21}-a_{31})x_1/(l_1+1)+ (a_{22}-a_{32})x_2/l_2 +(a_{13}-a_{23})x_3)$ 
           where $l_1=(a_{23}-a_{33})/(a_{13}-a_{23})$, $l_2=(a_{33}-a_{13})/(a_{13}-a_{23})$, under conditions\\
	   $a_{11}-a_{31}\neq0$, $a_{12}-a_{32}\neq0$, $a_{13}-a_{33}\neq0$, $a_{23}-a_{33}\neq0$, $a_{13}-a_{23}\neq0$, $b_1-b_2=b_1-b_3=0$ 
	   and $a_{11}-a_{21}=a_{12}-a_{22}=0$;
\item[7d.] $H=x_1^{l_1}x_2^{l_2}(-(b_2+b_3)/(l_1)-(a_{21}+a_{31})x_1/l_2+(a_{13}+a_{23})x_3)$, where 
           $l_1 = -(a_{23}+a_{33})/(a_{13}+a_{23})$ and $l_2 = -(a_{33}-a_{13})(a_{13}+a_{23})$, under conditions\\
	   $a_{13}+a_{23} \neq 0$, $b_1+b_2=0$, $a_{11}+a_{21}=0$, $a_{12}+a_{22}=0$, $a_{12}-a_{32}=0$ and $(b_1-b_3)a_{23}-(b_2+b_3)a_{13}=0$;
\item[8a.] $H=x_1^{l_1}x_2^{l_2}x_3^{l_3}(b_1 + a_{11}x_1)$ where $l_1=-(a_{11}B_2)/(a_{11}B_2-b_1A_{21})$, 
           $l_2 = -(b_1\alpha)/(B_2)$, $l_3=(b_1\beta)/(B_2)$, and $\alpha,\beta$ are solutions of 
           $A_{22}=0$, $A_{23}=0$, under conditions\\
	   $b_1^2+a_{11}^2\neq0$, $B_2\neq0$, $a_{11}B_2-b_1A_{21}\neq0$, $a_{12}=a_{13}=0$ and $a_{22}a_{33}-a_{32}a_{23}=0$;
\item[8b.] $H=x_1^{l_1}x_2^{l_2}x_3^{l_3} (a_{12}x_2/l_3-a_{13}x_3/l_2)$ where 
\[
    l_1=\frac{(a_{22}-a_{32})(a_{23}-a_{33})}{-a_{12}(a_{23}-a_{33})+a_{13}(a_{22}-a_{32})} \quad
    l_2=\frac{a_{12}(a_{23}-a_{33})}{-a_{12}(a_{23}-a_{33})+a_{13}(a_{22}-a_{32})},
\]
and $l_2+l_3=-1$, under conditions\\
$a_{13}\neq0$, $a_{23}\neq0$, $a_{22}-a_{32}\neq0$, $a_{23}-a_{33}\neq0$, $b_1=a_{11}=0$ and $b_2-b_3=a_{21}-a_{31}=0$;
\item[8c.] $H=x_1^{l_1}x_2^{l_2}x_3^{l_3}((b_1+b_2)/(l_3)+(a_{11}+a_{21})x_1/l_3+(a_{12}+a_{22})x_2/l_3-(a_{23}+a_{33})x_3/l_1)$
where 
\[
    l_1=\frac{A_{22}(A_{21}-A_{11})}{A_{11}A_{22}-A_{21}A_{12}} \quad 
    l_2=\frac{A_{11}(A_{12}-A_{22})}{A_{11}A_{22}-A_{21}A_{12}} \quad 
    l_3=\frac{A_{31}(A_{33}-A_{23})}{A_{31}A_{23}-A_{21}A_{33}},
\]
where $A_{11}A_{22}-A_{21}A_{12}\neq0$, $A_{31}A_{23}-A_{21}A_{33}\neq0$ and  $\alpha=\beta=\gamma$  under conditions\\
$B_1A_{22}(A_{21}-A_{11})+B_2A_{11}(A_{12}-A_{22})=0$, $A_{13}A_{22}(A_{21}-A_{11})$, $+A_{23}A_{11}(A_{12}-A_{22})=0$, $B_3A_{23}(A_{21}-A_{31})+B_2A_{31}(A_{33}-A_{23})=0$ and $A_{32}A_{23}(A_{21}-A_{31})+A_{22}A_{31}(A_{33}-A_{23})=0$ with $\alpha=\beta=\gamma$.
\end{enumerate}
\end{lemma}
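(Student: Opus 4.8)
The plan is to follow the same strategy as in the proofs of Lemmas \ref{lemma111}--\ref{lemma100}, specialised to the integrating factor matrix $T_2$ and to vanishing constant terms. Since $e_1=e_2=e_3=0$, conditions (\ref{cond6m})--(\ref{cond6o}) hold trivially, so the whole content of $\curl(T_2 f)=0$ is carried by the twelve linear equations (\ref{cond6a})--(\ref{cond6l}) in the unknowns $l_1,l_2,l_3$, whose coefficients are the composite quantities $B_i$ and $A_{ij}$ of (\ref{term_table}); these in turn depend linearly on the three free parameters $\alpha,\beta,\gamma$ of the Ansatz (\ref{T2}). First I would record the two syzygies noted right after (\ref{term_table}), namely $B_1\beta+B_2\gamma-B_3\alpha=0$ and $A_{1i}\beta+A_{2i}\gamma-A_{3i}\alpha=0$ for all $i$; they show that the twelve equations are not independent and cut the effective rank of the system down, which is exactly what makes nontrivial solutions possible.

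Next I would organise the analysis as a case distinction on the pattern of vanishing among $\alpha,\beta,\gamma$ (up to the coordinate-permutation symmetry invoked at the end of the section), and, within each such case, on the rank of the relevant coefficient matrix acting on $(l_1,l_2,l_3)$ --- precisely the ``overdetermined linear system $Al=r$, solvable iff $r$ is orthogonal to the left null space of $A$'' device already used in Section \ref{2d}. Solving the resulting linear system yields the exponents $l_1,l_2,l_3$ (and sometimes residual linear relations among $\alpha,\beta,\gamma$) as explicit rational expressions in the $a_{ij},b_i$: the denominators forced to be nonzero become the genericity hypotheses, and the compatibility of the overdetermined part becomes the displayed algebraic conditions in items 1--8c. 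Once $l_1,l_2,l_3$ and hence $R=x_1^{l_1-1}x_2^{l_2-1}x_3^{l_3-1}$ are fixed, the first integral follows by the $T_2$ analogue of the quadrature (\ref{eq:integral3}): integrate the third component of $T_2 f$, namely $R(\beta x_2 f_1+\gamma x_1 f_2)$, with respect to $x_3$, pin down the remaining function of $(x_1,x_2)$ from the other two components of $\nabla H=T_2 f$, and read off $H$. Each place where some exponent $l_i$, or a shifted exponent $l_i+1$, vanishes forces a logarithm in the quadrature, and tracking these degeneracies is exactly what produces the proliferation of sub-items (3, 5; 7a--7d; 8a--8c).

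Finally I would check exhaustiveness: argue that any remaining zero pattern of $(\alpha,\beta,\gamma)$, or any remaining rank configuration, is either empty, or reduces after a permutation of the indices $\{1,2,3\}$ to a case already listed, or collapses to the two-dimensional situation of Section \ref{2d} (this is what the closing remark about items 3 and 5 refers to). The verification that each listed $H$ is genuinely a first integral --- i.e.\ that $\nabla H=T_2 f$, equivalently $f\cdot\nabla H=0$ --- is a direct substitution of (\ref{eq:lv_e3}) and, while tedious, presents no conceptual difficulty. I expect the main obstacle to be organisational rather than mathematical: the case tree for the three parameters $\alpha,\beta,\gamma$, combined with independent rank drops of several $3\times 2$ and $3\times 3$ coefficient matrices and with the vanishing of up to three exponents and their shifts, is large, and the real work lies in showing the enumeration in items 1--8c is complete and non-redundant modulo the coordinate-permutation symmetry, not in any single computation.
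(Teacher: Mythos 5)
Your plan is essentially the paper's own route: with $e_1=e_2=e_3=0$ the conditions (\ref{cond6m})--(\ref{cond6o}) are vacuous, and the lemma is obtained exactly by solving the bilinear system (\ref{cond6a})--(\ref{cond6l}) case by case (using the syzygies after (\ref{term_table}) and the overdetermined-linear-system device of Section \ref{2d}), then recovering $H$ by the $T_2$ analogue of the quadrature (\ref{eq:integral3}), with logarithms appearing when exponents degenerate. The paper itself omits the detailed case enumeration, citing the analogy with the proof of Lemma \ref{lemma111}, so your outline sits at the same level of detail and takes the same approach.
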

Remark that the integral in the Lemma \ref{lemma000} no 5. is the same integral we found in the two-dimensional LV case. 

\subsection{On the first integrals of three-dimensional LV systems}
We here give some remarks about lemma \ref{lemma000}, in which we discussed first integrals of three-dimensional LV systems that have already been extensively discussed in the literature~\cite{ollagnier,cairo92families,cairo00,plank,gao00,gao98,gao99,Gonzalez-Gascon00,grammaticos,cairo3d,cairo_nonlinearmathphy00}. The forms of first integrals obtained in lemma \ref{lemma000} (points 1-4) are similar to the ones obtained by Plank (1995) \cite{plank}, which in fact, are special cases \cite{cairo_comment} of invariants found in Cair{\'o} and Feix 
\cite{cairo92families}. It is not hard to show that their integrals are different from the ones we have obtained in this paper.

The other integrals in the lemma \ref{lemma000} (points 5-8) have the following general form 
\begin{equation}\label{form:Dar}
x_1^{\lambda_1}x_2^{\lambda_2}x_3^{\lambda_3}\,\varphi(x_1,x_2,x_3)^{\lambda_4} 
\end{equation} 
where $\lambda_1$, $\lambda_2$, $\lambda_3$ and $\lambda_4$ are some constants and $\varphi$ is a polynomial function of degree one in $x_1,x_2,x_3$. To our best knowledge, the above general form first appeared as a first integral in Cair{\'o} and Feix \cite{cairo92families}, except the fact that the first integral in their paper is time-dependent. Through a time-rescaling (see Hua et al. \cite{hua93}) they obtained a time-independent first integral. The existence of first integrals of this form of three-dimensional LV systems is also extensively investigated by Cair{\'o} \cite{cairo_nonlinearmathphy00}. He investigated a polynomial function $\varphi$ of degree one and two. The integral functions in point 5-6 has the same form with the ones he found \cite[Theorem 2(1)]{cairo_nonlinearmathphy00}. The forms of integrals in point 7 do not seem to have been recognized before, thus these new results extend the known results on integrals of the form (\ref{form:Dar}). The integral of the form 8a generalizes integrals obtained in \cite[Theorem 2(8-13)]{cairo_nonlinearmathphy00}. Finally, integrals of the form 8b and 8c seem to be new.

%--------------------------------------------------------------------------------------------------------------------------------
%Section four -- Concluions
%--------------------------------------------------------------------------------------------------------------------------------
\section{Conclusion}
In this paper, we have derived first integrals for two and three-dimensional LV systems with constant terms 
through an integrating factor matrix. We make Ansatzs and obtain conditions for the existence of first integrals. 
By our method, the search for integrals in dynamical systems changes into a linear algebra problem.
We note that some conditions of the two and three LV system with constant terms to have first integrals 
do not involve any constant terms. This is good because under such conditions tfirst integrals are preserved 
for any constant terms.

In the two-dimensional case, the integrating factor matrix $T(x_1,x_2)$ in equation (\ref{eq:intmatrix}), along with the condition such that $T$ is an integrating factor, turns out to be similar to the one, used by Plank~\cite{plank}  due to $S=T^{-1}$. In the three-dimensional case this property no longer applies, as a $3 \times 3$ skew-symmetric matrix is not invertible. Hence, our analysis generalises the work of Plank's.

\noindent {\bf Comparison with Darboux method} 
The Darboux method has been applied to find first integrals of two-dimensional \cite{cairo99,cairo97} and three-dimensional \cite{cairo3d,cairo_nonlinearmathphy00} LV systems. Compared to this method, our method has advantages in the context of searching for a first integral of LV systems with constant terms. To apply the Darboux method, one must seek an algebraic curve of a vector field, and LV systems without constant terms have natural algebraic curves as the axes are invariant. For systems with constant terms, this no longer applies.

\noindent {\bf Comparison with Hamiltonian method} 
The existence of first integrals of two-dimensional systems has been obtained using Hamiltonian method \cite{hua93,cairo92ontheHam,cairo93}. They assumed that the integrals are products of two or three polynomial functions of degree one. 
The advantage of our method is that we only make a single Ansatz.
Gao \cite{gao99} has also applied a Hamiltonian method to find first and second integrals of special cases of three-dimensional LV systems, where the linear terms are absent. 

\noindent {\bf Comparison with Frobenius method} The Frobenius method was first introduced by Strelcyn and Wojciechowski~\cite{strelcyn88} to find a first integral for three-dimensional systems. It has been used to find integrals for LV systems by Grammaticos et al.~\cite{grammaticos}. Unfortunately, they only look at a special case of the LV system, which is the so-called ABC system. 

\noindent {\bf Comparison with Carleman embedding method} The existence of first integrals in $n$-dimensions has been studied \cite{cairo92families,cairo89} through the Carleman embedding method. However, the integrals that are obtained are time-dependent and are refered to as \emph{invariants}. 

For future work, it would be a challenging problem to find a more general integrating factor matrix of a vector field in dimension greater than or equal to two. 
Several first integrals with the corresponding conditions have been published in the literature and we could use these results to endeavour to find the general integrating factor matrix. 

\ack{These investigations were funded by the Australian Research Council. Author LvV was also supported by the National Science and Engineering Research Council, grant nr. 355849-2008}

\section*{References}
\bibliographystyle{unsrt}
\bibliography{first_integral}
\end{document}